\let\phi\varphi
\newcommand\tupl[1]{\overline{#1}}
\let\subset\subseteq
\def\compNP{{\textsf{NP}}}
\def\compEXPTIME{{\textsf{EXPTIME}}}
\def\compP{{\textsf{P}}}
\def\algA{{\mathbf{A}}}
\def\algB{{\mathbf{B}}}
\newcommand\vect[1]{\overline{#1}}
\def\en{{\mathbb N}}
\let\epsilon\varepsilon
\theoremstyle{plain}
\newtheorem{theorem}{Theorem}
\newtheorem{lemma}[theorem]{Lemma}
\newtheorem{corollary}[theorem]{Corollary}
\newtheorem{observation}[theorem]{Observation}
\newtheorem{proposition}[theorem]{Proposition}
\theoremstyle{definition}
\newtheorem{definition}[theorem]{Definition}
\newtheorem{remark}[theorem]{Remark}
\author{Alexandr Kazda\email{alex.kazda@gmail.com}\footnote{ORCID
0000-0002-7338-037X}}
\institute{Department of Algebra, MFF, Charles University, Czechia}
\title{Deciding the existence of quasi weak near unanimity terms in
finite algebras}
\begin{document}
\maketitle
\begin{abstract}
We show that for a fixed positive integer $k$ one can
efficiently decide if a finite algebra $\algA$ admits a $k$-ary weak near unanimity
operation by looking at the local behavior of the terms of
$\algA$. We also observe that the problem of deciding if a given
finite algebra has a quasi Taylor operation is solvable in
polynomial time by looking, essentially, for local quasi Siggers operations.
\end{abstract}
\keywords{computational complexity, Maltsev condition, Taylor term, weak near unanimity,
local to global}

\section{Introduction}%{{{1
Maltsev conditions, the functional equations that have a solution in a
given algebra, serve as useful lens through which to view the behavior of
algebras and varieties. Classically, various properties
of congruence lattices of varieties are equivalent to Maltsev
conditions~\cite{conm}. More recently, height 1 Maltsev conditions
turned out to describe the
complexity of the non-uniform constraint satisfaction
problem~\cite{wonderland}.

Given a finite algebra and a fixed Maltsev condition, we would like to decide
if the algebra has a term satisfying the Maltsev condition. Since Maltsev
conditions tell us much about the algebra in question, a practical test for
Maltsev conditions is useful when examining concrete algebras. For example, the
Universal Algebra Calculator~\cite{UACalc} program can, among other things,
test for various Maltsev conditions including the presence of a $k$-ary weak
near unanimity term. (The Calculator does not at the moment of this
writing implement the algorithms presented in this paper, however.)

At the moment, the only known widely applicable method to test algebras for
Maltsev conditions is to check whether the Maltsev condition holds locally and
then, hopefully, put together the local pieces into a term or terms that
satisfy the Maltsev condition globally. This ``local to global'' method works
for a wide spectrum of Maltsev conditions in idempotent algebras
\cite{freese-valeriote-maltsev-conditions, horowitz-ijac, kazda-valeriote}
(however, see~\cite{minority-report} for a case where local terms are not
enough to construct a global one).

Characterizing the Maltsev conditions for which the local to global method works is
an open problem. In this paper we show that the local to global method works when
deciding whether the input finite
algebra $\algA$ admits a $k$-ary quasi weak near
unanimity operation ($k$-qWNU) for a fixed $k$. In this situation, we can check for local
$k$-qWNU operations in time polynomial in the size of $\algA$ and thus
we obtain an efficient algorithm for deciding if a given algebra has a
$k$-qWNU operation. We also explain how the related problem of deciding if an input
algebra has a quasi Taylor operation can be solved by a local to
global algorithm.

The ``quasi'' Maltsev conditions do not force the operations in
question to be idempotent and our result does not require that the input
algebra is idempotent. This is in contrast to the numerous examples of
idempotent Maltsev conditions that are $\compEXPTIME$-complete to
decide in general finite
algebras~\cite{freese-valeriote-maltsev-conditions, horowitz-ijac}.
We speculate that the ``local to global'' construction will remain
useful for general algebras as long as we avoid the need to construct
an idempotent operation out of non-idempotent basic operations. In
particular we believe that the local to global method should work 
for many height 1 Maltsev conditions other than having a qWNU or a
quasi Taylor operation.

Finally, let us remark that our complexity results do not translate into the
situation when an algebra $\algA$ is given not by its basic operations, but
rather by a set of invariant relations (that is, operations of $\algA$ are the
polymorphisms of a given relational structure). In such a situation it is
expensive to compute subpowers generated by tuples and hence the
straightforward polynomial time algorithm from Theorem~\ref{thmWNU} becomes
inefficient. In fact, Chen and Larose proved~\cite[Theorem
6.2]{chen-larose-meta} that deciding if a given relational structure satisfies
any nontrivial, consistent, strong linear Maltsev condition of height 1 is
\compNP-complete; this result covers both quasi Siggers and $k$-qWNU operations
for a fixed $k$. On the other hand, \cite[Theorem
6.2]{chen-larose-meta} does not extend to having a $k$-ary 
weak near unanimity operation because idempotence is not a height 1 condition. As far
as we know, the complexity of deciding whether a given relational structure has
a $k$-ary weak near unanimity operation is an open problem.

\section{Preliminaries}%{{{1

An $n$-ary \emph{operation} on a set $A$ is a mapping $f\colon A^n\to A$.
An \emph{algebra} $\algA$ consists of non-empty set $A$, called the
\emph{universe} or \emph{domain} of $\algA$, and a set of \emph{basic
operations} $f_i$ where $i$ ranges over some index set $I$. In this
paper we will only consider algebras with finite universes. If $\tau$ is a
unary operation on $A$ and $t$ an $n$-ary operation on $A$ we will denote by 
$\tau\circ t$ the composition $\tau\circ t(x_1,\dots,x_n)=\tau(t(x_1,\dots,
x_n))$.

A \emph{term} is a valid formal expression that describes an operation as a composition of
basic operation and variable symbols. Examples of terms are
``$f(x,g(y,y,x),h(z))$'' or ``$x$.'' For a formal definition of terms,
see~\cite[Definition 10.1]{burris-sankapanavar}. If the symbols in a term $t$ are the basic
operations of an algebra $\algA$, we can evaluate $t$ in $\algA$ and obtain an
operation. Operations that we can construct in this way are called the \emph{term
operations} of $\algA$. We will call the set of all term operations of $\algA$ 
the \emph{clone} of $\algA$. If the clone of $\algA$ contains a particular kind of
operation, we will often simply say that $\algA$ has this kind of operation.

An \emph{identity} or an \emph{equation} is the statement $t\approx u$
where $t$ and $u$ are terms. The identity $t\approx u$ holds in an
algebra $\algA$ if $t$ and $u$ evaluate to the same operation in
$\algA$ (that is, the identity holds for all values of the input
variables).

A strong \emph{Maltsev condition} is a finite system of identities. An algebra
$\algA$ satisfies a strong Maltsev condition $\Sigma$ if we can choose for each
operation symbol in $\Sigma$ some operation in $\algA$ so that all the
identities of $\Sigma$ hold. A strong Maltsev condition is of \emph{height 1} if both the left
and the right hand side of all identities in the Maltsev condition 
contain exactly one term symbol (i.e. both sides are of the form ``term(variables)'').

An operation $t$ on a set $A$ is \emph{idempotent} if the identity $t(x,\dots,x)\approx x$
holds. An algebra $\algA$ is \emph{idempotent} if each of its basic operations is idempotent.

A $k$-ary operation $w$ is called a \emph{weak near unanimity} (WNU)
operation if it is idempotent and satisfies the chain of identities
\[
  w(y,x,\dots,x,x)\approx w(x,y,\dots,x,x)\approx\dots\approx
  w(x,x,\dots,x,y).
\]
If we drop the idempotence requirement, we will call $w$ a \emph{quasi weak
near unanimity} (qWNU).

An operation is a \emph{Taylor operation} if it is idempotent and
satisfies a system of identities of the form 
\begin{align*}
  t(x,?,?,\dots,?)&\approx t(y,?,?,\dots,?)\\
  t(?,x,?,\dots,?)&\approx t(?,y,?,\dots,?)\\
		  &\vdots\\
  t(?,?,?,\dots,x)&\approx t(?,?,?,\dots,y),
\end{align*}
where $x,y$ are variables and the question marks stand for some choice of $x$'s and $y$'s.
As before, if we do not require $t$ to be idempotent, we will call $t$ a \emph{quasi
Taylor} operation.

It is a classic result by W. Taylor~\cite[Corollary 5.2 and 5.3]{taylor} that an idempotent variety
$\mathcal V$ interprets into the trivial algebra on two elements if and only if
$\mathcal V$ does not have a Taylor term operation. Obviously, a $k$-WNU
operation is a special case of a Taylor operation. Much less obviously, it turns out that
having a (quasi) Taylor operation implies having a (quasi) weak near unanimity
operation of some arity as well as a specific arity 4 (quasi) Taylor term. For idempotent
algebras this was shown in~\cite{maroti-mckenzie-wnu, siggers-original}. We state this result as a
variant of~\cite[Theorem 1.4]{wonderland} here (in particular, we
replace cyclic terms with the weaker WNU terms):
\begin{theorem}\label{thmTaylorOmnibus}
  Let $\algA$ be a finite algebra. Then the following are equivalent:
  \begin{enumerate}[a)]
    \item $\algA$ has a quasi Taylor term,
    \item there exists $k\geq 2$ such that $\algA$ has a $k$-qWNU term,
    \item $\algA$ has a quasi Siggers term, i.e. an arity 4 term $s$ satisfying the
      identity
      \[
	s(r,a,r,e)\approx s(a,r,e,a).
	\]
  \end{enumerate}
  Moreover, if $\algA$ is idempotent, the ``quasi'' qualifier can be
  dropped in the first two points and $s$ is idempotent in the last
  point.
\end{theorem}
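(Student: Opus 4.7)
The strategy is to prove the cycle (c) $\Rightarrow$ (a) $\Rightarrow$ (b) $\Rightarrow$ (c) and to handle the moreover clause in parallel.

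For (c) $\Rightarrow$ (a), I would do a direct substitution exercise in the quasi Siggers identity $s(r,a,r,e) \approx s(a,r,e,a)$: by identifying two of the three variables $r,a,e$ appropriately, one extracts a Taylor identity in each of the four coordinates. Concretely, $e=a$ yields a position-1 identity, $e=r$ a position-2 identity, $a=r$ a position-3 identity, and (read differently) the same substitution yields a position-4 identity. For the idempotent versions of (a) $\Rightarrow$ (b) and (b) $\Rightarrow$ (c), I would invoke \cite{maroti-mckenzie-wnu} and \cite{siggers-original} directly; together with the calculation above, these references establish the moreover clause.

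The quasi (non-idempotent) case reduces to the idempotent one via the standard ``idempotent unary'' trick. Given a quasi Taylor term $t$ of arity $n$, let $\tau(x) := t(x,\dots,x)$ and, using finiteness of $\algA$, choose $N$ so large that $\sigma := \tau^N$ is idempotent and $\tau^{N-1}(A) = \sigma(A) =: B$. Replace $t$ by $t''(x_1,\dots,x_n) := \tau^{N-1}(t(x_1,\dots,x_n))$. Pre-composition with a unary term preserves height-1 identities, so $t''$ is still quasi Taylor; moreover $t''$ has image inside $B$ and $t''(b,\dots,b)=\sigma(b)=b$ for every $b \in B$. Hence $t''|_B$ is an \emph{idempotent} Taylor operation on $B$, so $\algB:=(B; t''|_B)$ is a genuine finite idempotent algebra with a Taylor term. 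Applying the idempotent case to $\algB$ produces a $k$-WNU term $w_B$ on $B$, expressed as a composition of $t''|_B$ with projections. Lift this composition by replacing $t''|_B$ with $t''$ to obtain a $k$-ary term $w^*$ of $\algA$ with $w^*|_{B^k}=w_B$, and set $w(x_1,\dots,x_k) := w^*(\sigma(x_1),\dots,\sigma(x_k))$. Since each $\sigma(x_i) \in B$, the WNU identities of $w_B$ on $B$ transfer to qWNU identities of $w$ on $A$, establishing (a) $\Rightarrow$ (b). An analogous reduction from a $k$-qWNU of $\algA$ to a $k$-WNU of $\algB$, followed by the idempotent Siggers step, handles (b) $\Rightarrow$ (c).

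The main obstacle is the bookkeeping in this reduction: the value of $N$ must be chosen to simultaneously make $\sigma$ idempotent, stabilize the descending chain $A \supseteq \tau(A) \supseteq \tau^2(A) \supseteq \dots$, and place the image of $t''$ inside $B$. Once these details are arranged, the key insight---that any height-1 Maltsev condition is preserved by pre- and post-composition with $\sigma$---lets one freely move between the idempotent world on $B$ and the quasi world on $A$, reducing everything to the cited idempotent results.
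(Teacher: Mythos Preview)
The paper does not actually prove Theorem~\ref{thmTaylorOmnibus}; it merely states it as a known result, citing \cite{maroti-mckenzie-wnu,siggers-original} for the idempotent case and \cite[Theorem~1.4]{wonderland} for the general (quasi) case. So there is no ``paper's own proof'' to compare against here.

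That said, your sketch is correct and is in fact very close to how the paper itself packages the quasi-to-idempotent reduction elsewhere: your ``idempotent unary'' trick is precisely a special case of the paper's Lemma~\ref{lem:idempotent}. The paper's lemma is slightly more general (it chooses $\alpha$ with inclusion-minimal image among \emph{all} unary term operations of $\algA$, and concludes that $\algA$ and $\algB$ satisfy the same height~1 strong Maltsev conditions), whereas you build $B$ from the single unary $\tau(x)=t(x,\dots,x)$ coming from the given quasi Taylor term. Your version suffices for the implication (a)~$\Rightarrow$~(b), and the analogous version built from the qWNU suffices for (b)~$\Rightarrow$~(c); the paper's formulation has the advantage that a single $\algB$ works uniformly for all height~1 conditions at once. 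Your substitution check for (c)~$\Rightarrow$~(a) is fine, and the bookkeeping you flag (choosing $N$ large enough that the image chain has stabilised and $\tau^N$ is a retraction) is exactly the issue handled in the first paragraph of the proof of Lemma~\ref{lem:idempotent}.
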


The following lemma will allow us to transfer properties between general and idempotent
algebras. This is not really a new result; rather, it is a different way to
express some ideas present in~\cite{wonderland} and~\cite[Lemma
6.4]{chen-larose-meta} in a style similar
to the construction of minimal algebras in the tame congruence theory~\cite{conm}.
\begin{lemma}\label{lem:idempotent}
  Let $\algA$ be a finite algebra with the universe $A$. Then there exists a finite \emph{idempotent}
  algebra $\algB$ with universe $B\subset A$ such that:
  \begin{enumerate}[a)]
    \item There exists a unary term operation $\alpha$ of $\algA$
      whose image is
      $B$ and such that $\alpha\circ\alpha=\alpha$,
    \item\label{itm:h1} for each strong Maltsev condition $\Sigma$ of
      height 1 we have that $\algA$ satisfies
      $\Sigma$ if and only if $\algB$ satisfies $\Sigma$, and
    \item\label{itm:composition} for each operation $t$ in the clone of term operations of $\algA$
      there exists a unary term operation $\tau$ of $\algA$ such that
      $\tau\circ t$
      restricted to $B$ lies in the clone of term operations of $\algB$.
  \end{enumerate}
\end{lemma}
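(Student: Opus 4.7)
The plan is to build $\algB$ on $B=\alpha(A)$ for a suitable retraction $\alpha$, taking as basic operations of $\algB$ carefully chosen post-compositions $\tau\circ t$ of term operations $t$ of $\algA$ with unary term operations $\tau$ that force idempotence on $B$.

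First I pick any unary term operation of $\algA$ whose image has minimum possible size and raise it to a large enough power so that it becomes an idempotent of the (finite) monoid of unary term operations; call the result $\alpha$ and set $B:=\alpha(A)$. Then $\alpha\circ\alpha=\alpha$ and $\alpha|_B$ is the identity on $B$, which gives part~(a). A minimality argument---used repeatedly in what follows---shows that every unary term operation $v$ of $\algA$ with $v(A)\subseteq B$ restricts to a permutation of $B$: both $v$ and $v\circ\alpha$ are unary terms of $\algA$, so their images $v(A)$ and $v(B)=v(\alpha(A))$ are contained in $B$ and of size at least $|B|$, hence both equal $B$, making $v|_B\colon B\to B$ surjective.

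To define $\algB$, for each term operation $t$ of $\algA$ of arity $n$ I set $u_t(x):=\alpha(t(x,\dots,x))$, which is a unary term of $\algA$ with image in $B$. By the observation above, $u_t|_B$ is a permutation of $B$; let $k_t$ be its order and put $\tau_t:=u_t^{k_t-1}\circ\alpha$. Declare $t^\algB:=\tau_t\circ t$ restricted to $B^n$ to be a basic operation of $\algB$. A direct calculation gives $t^\algB(b,\dots,b)=u_t^{k_t}(b)=b$, so every $t^\algB$ is idempotent on $B$ and $\algB$ is an idempotent algebra. Property~(c) then reads off the definition by taking $\tau:=\tau_t$. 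For the direction of (b) from $\algB$ to $\algA$, any $g_1,\dots,g_r$ in the clone of $\algB$ witnessing a height-1 $\Sigma$ extend to term operations $T_i$ of $\algA$ with $T_i|_{B^{n_i}}=g_i$; replacing each $T_i(x_1,\dots,x_{n_i})$ by $T_i(\alpha(x_1),\dots,\alpha(x_{n_i}))$ yields witnesses of $\Sigma$ on all of $A$, because $\alpha$ lands in $B$ where $\Sigma$ already holds.

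The main obstacle is the remaining direction of (b): the naive choice $g_i:=t_i^\algB$ inserts different unary terms $\tau_{t_i}$ on opposite sides of an identity of $\Sigma$, and one would worry that this could destroy the identity. The resolution is an observation specific to height~1: for an identity $f_a(x_{\sigma_1},\dots,x_{\sigma_{n_a}})\approx f_b(x_{\rho_1},\dots,x_{\rho_{n_b}})$ of $\Sigma$ witnessed in $\algA$ by $t_a,t_b$, substituting the same variable $x$ for every $x_j$ forces $t_a(x,\dots,x)=t_b(x,\dots,x)$, hence $u_{t_a}=u_{t_b}$ as unary operations of $\algA$. Consequently $k_{t_a}=k_{t_b}$ and $\tau_{t_a}=\tau_{t_b}$, so one and the same unary $\tau$ sits in front of $t_a$ and $t_b$ in $t_a^\algB$ and $t_b^\algB$, and the identity on $A$ transfers directly to the identity on $B$ required of $\algB$.
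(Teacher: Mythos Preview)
Your proof is correct and follows essentially the same approach as the paper: choose $\alpha$ with minimal image and raise it to a power to make it a retraction, build $\algB$ from post-compositions $\tau\circ t$ that are idempotent on $B$, and for the delicate direction of (b) observe that a height~1 identity forces the diagonals $t_i(x,\dots,x)$ to agree, so the correcting unary $\tau$ is the same on both sides. Your version is slightly more explicit in that you give a deterministic formula $\tau_t=u_t^{k_t-1}\circ\alpha$ for every $t$, whereas the paper just notes that $\tau$ ``depends only on the mapping $t(x,\dots,x)$''; the two constructions yield the same algebra $\algB$.
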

\begin{proof}
  Let $\alpha$ be a unary term operation of $\algA$ such that the image of
  $\alpha$ is inclusion minimal among all images of unary operations
  in the clone of $\algA$. Denote the image set of $\alpha$ by $B$. Since $\alpha$ restricted to $B$ needs to be a permutation (or else
  $\alpha^2$ would have a smaller image than $B$), we can replace
  $\alpha$ by its suitable power so that $\alpha$ restricted to $B$ is the identity map.
  This gives us the identity $\alpha\circ\alpha=\alpha$.
  
  We construct the algebra $\algB$ on $B$ as follows: For each term
  operation $t$ of $\algA$ consider the operation $u=\alpha\circ t$
  restricted to $B$. If $u$ is idempotent, we make it into a basic
  operation of $\algB$. By definition $\algB$ is then an idempotent
  algebra. 
 
  Let us now prove~(\ref{itm:composition}). Let $t$ be an operation in
  the clone of $\algA$. Consider the mapping $\beta\colon
  B\to B$ given by $\beta(b)=\alpha(t(b,b,\dots,b))$. From the
  minimality of $B$, we get that the image of $\beta$ needs to be
  exactly $B$ and that $\beta$ is a permutation. Let $n$ be
  such that $\beta^{n+1}$ is the identity mapping. Then the operation 
  \[
    u(x_1,\dots,x_n)=\beta^n\circ\alpha\circ t(x_1,\dots,x_n)
  \]
  restricted to $B$ is a basic operation of $\algB$. Taking $\tau=
  \beta^n\circ \alpha$ gives us part~(\ref{itm:composition}).

  To prove the ``only if'' part of item~(\ref{itm:h1}), let $\Sigma$ be
  a strong Maltsev condition of height 1 and let $t_1,\dots,t_k$ be operations
  from the clone of operations of $\algA$ that satisfy $\Sigma$. We know that
  for each $t_i$ there is a $\tau_i$ such that $\tau_i\circ t_i$ restricted to
  $B$ lies in the clone of $\algB$. Moreover, from the proof of
  part~(\ref{itm:composition}) we see that
  $\tau_i$ depends only on the mapping $t_i(x,x,\dots,x)$. From this it follows
  that we can choose $\tau_1,\dots,\tau_k$ so that when $t_i$ and $t_j$ appear in one identity in $\Sigma$ we must have
  $\tau_i=\tau_j$. This accomplished, it is straightforward to verify
  that the operations $\tau_1\circ t_1,\tau_2\circ
  t_2,\dots, \tau_k\circ t_k$ satisfy $\Sigma$ and the implication is
  proved.

  Finally, let us prove the ``if'' part of item~(\ref{itm:h1}). Let $\Sigma$ be
  a strong Maltsev condition of height 1. Assume that $\algB$ has operations 
  $t_1,\dots,t_k$ that satisfy $\Sigma$. Then it is easy to verify that
  composing $t_1,\dots, t_k$ with $\alpha$ from the inside (that is,
  considering operations of the form $t_i(\alpha(x_1),\alpha(x_2),\dots)$) gives us
  operations from the clone of operations of $\algA$ that also satisfy
  $\Sigma$.
\end{proof}

A subset $B$ of $A$ is a
\emph{subuniverse} of $\algA$ if it is closed under the basic
operations of $\algA$ (equivalently, under all term operations of
$\algA$). The \emph{subuniverse generated by $X\subset A$} is the
smallest subuniverse of $\algA$ that contains the set $X$. It is easy
to see that $a$ lies in the subuniverse of $\algA$ generated by $X$ if
an only if there is a term operation of $\algA$ that outputs $a$ when applied to
the list of elements of $X$.

The $n$-th power of $\algA$ is the algebra with universe $A^n$ and
operations $f_i$ acting coordinate-wise on $A^n$. A subuniverse of a
power of $\algA$ is called a \emph{subpower of $\algA$}.

A \emph{tuple} of elements of $A$ is a member of the set $A^n$. To
emphasize that $\tupl u$ is a tuple, we will sometimes put a bar above
it. In Section~\ref{sec:main} we will use a notation such as ``$(\tupl u,
\tupl v, e)$'' to denote a tuple, whose prefix is $\tupl u$, followed by
the sequence of elements from $\tupl v$ and finally the element $e\in
A$.

An \emph{$n$-ary relation} $R$ over a set $A$ is a subset of $A^n$. A relation
is \emph{admissible} for (or compatible with) $\algA$ if $R$ is a subuniverse
of $\algA^n$; in other words if $R$ is closed under the term operations of
$\algA$ applied coordinate-wise.

A set $A$ together with a binary relation $E\subset A^2$ is called \emph{a directed graph}
(digraph). When examining digraphs, the elements of $A$ are called vertices and
a pair $(u,v)\in E$ is called the edge from $u$ to $v$. A \emph{loop} in a digraph is an
edge of the form $(u,u)$ for some $u\in A$.

A \emph{walk} from $u$ to $v$ in a digraph $E$ is a sequence of vertices
and edges of the form
\[
  u=w_1,e_1, w_2, e_2,\dots,e_{k-1},w_k=v
\]
such that for each $i$ the edge $e_i\in E$ is either $(w_i,w_{i+1})$ (a forward
edge) or $(w_{i+1},w_i)$ (a backward edge). 

The \emph{algebraic length} of a walk is the number of its forward edges
minus the number of its backward edges. (Algebraic length is 
not uniquely defined for walks with loops, but this will not be a
concern for our purposes.) A \emph{directed cycle} of length $k$
is a walk  from some $u$ to the same vertex $u$ that consists of $k$
forward edges and no backward edges.

A digraph has algebraic length 1 if there exists $u\in A$ and a walk
from $u$ back to $u$ of algebraic length 1. In particular, a digraph has algebraic
length 1 if it contains two directed cycles of lengths $n$ and $n-1$ that share
a vertex.

A digraph is \emph{smooth} if for each $a\in A$ there exists $b,c\in A$ such
that $(a,b),(c,a)\in E$.

We will need the following theorem. As an aside, we note that the
theorem has since its publication inspired many variants
and generalizations, often called ``loop lemmas.''
\begin{theorem}[\protect{\cite[Theorem 3.5]{barto-kozik-cyclic-terms-and-csp}}]\label{thmLoop}
  If a smooth digraph (on a finite set) has algebraic length 1 and admits a Taylor polymorphism then it contains a loop.
\end{theorem}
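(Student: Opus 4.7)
The plan is to proceed by induction on $|A|$, after reducing to the case where the Taylor polymorphism is idempotent. For the reduction I would apply Lemma~\ref{lem:idempotent} to the polymorphism clone of $(A,E)$, obtaining a unary polymorphism $\alpha$ with $\alpha\circ\alpha=\alpha$ and image $B\subset A$. Because $\alpha$ preserves $E$, it maps walks to walks of the same algebraic length, so $(B, E\cap B^2)$ inherits smoothness and algebraic length 1 from $(A,E)$. Applying part~(\ref{itm:h1}) of the lemma to the height-1 quasi Siggers identity of Theorem~\ref{thmTaylorOmnibus} produces a Siggers polymorphism (idempotent in $\algB$, hence a Taylor polymorphism) on $\algB$. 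Since any loop in $(B,E\cap B^2)$ is a loop in $(A,E)$, it suffices to prove the theorem under the extra assumption that the Taylor polymorphism is idempotent.

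Assuming idempotence and arguing by induction on $|A|$, suppose for contradiction that $(A,E)$ has no loop. If some proper subuniverse $A'\subsetneq A$ of the polymorphism algebra induces a smooth subdigraph of algebraic length 1, the induction hypothesis produces a loop in $A'$, contradicting our assumption. So I may assume the \emph{critical} case in which no such proper $A'$ exists. By Theorem~\ref{thmTaylorOmnibus} the algebra has a $k$-WNU polymorphism $w$ for some $k\geq 2$, and the algebraic length 1 hypothesis provides a closed walk $\gamma$ of algebraic length 1 through some vertex $v$.

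The core of the argument is to apply $w$ coordinatewise to $k$-tuples of vertices tracing cyclically shifted copies of $\gamma$. The WNU identities equate the outputs obtained from distinct shifts, and the coordinatewise action of $w$ turns $k$ walks of algebraic length 1 into a new walk of algebraic length 1. Combined with criticality, which forbids the image of $w$ on such tuples from being confined to a proper invariant subdigraph that is still smooth and has algebraic length 1, one can show that at least one of the resulting walks is strictly shorter than $\gamma$. Iterating this shortening procedure must terminate, and the only possible termination state is a walk of length 1, namely a loop.

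The main obstacle is making the shortening step rigorous: verifying that $w$ applied to the shifted tuples really yields a \emph{strictly} shorter closed walk rather than merely an equivalent one, and ruling out degenerate obstructions that could stall the iteration. This is exactly where the absorption-theoretic machinery of Barto and Kozik — in particular a careful analysis of absorbing subuniverses and linked components of $(A,E)$ — does the heavy lifting, and it is the technical heart of their proof.
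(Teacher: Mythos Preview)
The paper does not prove this theorem at all: it is quoted verbatim as \cite[Theorem 3.5]{barto-kozik-cyclic-terms-and-csp} and used as a black box. There is therefore no ``paper's own proof'' to compare your proposal against; the only in-paper argument in this vicinity is Corollary~\ref{corqLoop}, which \emph{assumes} Theorem~\ref{thmLoop} and derives the quasi version from it via Lemma~\ref{lem:idempotent}.

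On the merits of your sketch, two remarks. First, your opening reduction is redundant: in this paper a Taylor operation is \emph{by definition} idempotent, so ``admits a Taylor polymorphism'' already puts you in the idempotent setting; what you wrote is essentially the proof of Corollary~\ref{corqLoop}, not a step toward Theorem~\ref{thmLoop}. Second, and more seriously, your proposal is not a proof but a narrative that names the right ingredients (WNU polymorphism, cyclically shifted walks, induction on $|A|$, absorption) and then explicitly concedes the crucial step: you say the shortening argument is ``exactly where the absorption-theoretic machinery of Barto and Kozik \ldots\ does the heavy lifting.'' That is precisely the content of the theorem you are trying to prove. Without actually carrying out the absorption/linkedness analysis, the inductive step has no force---there is no mechanism in your outline that guarantees a \emph{strictly} shorter closed walk, and ``criticality'' as you state it does not by itself rule out the iteration stalling. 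So as written this is a reasonable summary of the Barto--Kozik strategy, but not an independent argument.
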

Here ``admits a Taylor polymorphism'' means that the relation $E$ is compatible with some
algebra $\algA$ that has a Taylor term operation.

In our proof we will need the following corollary of Theorem~\ref{thmLoop}. We include a
proof here because we have not found it in the published literature; we do not claim
originality (the result was known, e.g., to
M. Olšák~\cite{olsak-loop-conditions}).
\begin{corollary}\label{corqLoop}
  If a smooth digraph (on a finite set) has algebraic length 1 and
  admits a quasi Taylor polymorphism then it contains a loop.
\end{corollary}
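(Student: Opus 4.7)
The plan is to bootstrap from the idempotent version of the loop lemma (Theorem~\ref{thmLoop}) by using Lemma~\ref{lem:idempotent} to replace the algebra providing the quasi Taylor polymorphism with an idempotent algebra whose universe is a subset of the original domain, and then to transfer the digraph to this subset via the unary operation $\alpha$ from that lemma.

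So, let $(A,E)$ be the given smooth digraph of algebraic length 1 and let $\algA$ be a finite algebra with universe $A$ that has $E$ as a subpower of $\algA^2$ and has a quasi Taylor term. Apply Lemma~\ref{lem:idempotent} to obtain an idempotent algebra $\algB$ on a subset $B \subset A$ together with the unary term operation $\alpha$ of $\algA$ satisfying $\alpha \circ \alpha = \alpha$ and $\alpha(A) = B$. By Theorem~\ref{thmTaylorOmnibus}, having a quasi Taylor term is equivalent to having a quasi Siggers term, which is a strong Maltsev condition of height~1; thus by part~(\ref{itm:h1}) of Lemma~\ref{lem:idempotent}, $\algB$ also has a quasi Siggers term, and since $\algB$ is idempotent this is in fact a genuine Siggers (hence Taylor) term.

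Next, consider the induced digraph $E' = E \cap B^2$. I would verify three properties. First, since $\alpha$ is a term operation of $\algA$, it is a graph endomorphism of $(A,E)$, so whenever $(a,b) \in E$ we have $(\alpha(a),\alpha(b)) \in E'$. Second, smoothness of $E'$: for $b \in B$, take $a,c \in A$ with $(b,a),(c,b) \in E$ given by smoothness of $E$; applying $\alpha$ (and using $\alpha(b)=b$) yields $(b,\alpha(a)),(\alpha(c),b) \in E'$. Third, algebraic length 1: take a closed walk of algebraic length 1 in $E$ based at some $u$, and apply $\alpha$ coordinatewise; the result is a closed walk in $E'$ based at $\alpha(u)$ of the same algebraic length since $\alpha$ does not change forward/backward status of edges. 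Finally, $E'$ is compatible with $\algB$: the basic operations of $\algB$ have the form $\alpha \circ t$ restricted to $B$, and given tuples from $E'$ we can apply $t$ (staying in $E \subset A^2$), then apply $\alpha$ coordinatewise (staying in $E'$).

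Having set this up, I would apply Theorem~\ref{thmLoop} to the smooth digraph $(B, E')$ of algebraic length 1 equipped with the Taylor polymorphism from $\algB$, obtaining a loop $(b,b) \in E' \subset E$. I don't see a serious obstacle; the only place requiring care is arranging that the height 1 hypothesis of Lemma~\ref{lem:idempotent}(\ref{itm:h1}) is applicable, which is exactly why the detour through quasi Siggers via Theorem~\ref{thmTaylorOmnibus} is needed rather than trying to transfer ``quasi Taylor'' directly (that condition is not literally of height 1 since it involves a universally quantified system of identities whose shape depends on the arity of $t$).
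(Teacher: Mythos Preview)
Your proposal is correct and follows essentially the same approach as the paper: pass to the idempotent algebra $\algB$ from Lemma~\ref{lem:idempotent}, restrict the digraph to $B$, and invoke Theorem~\ref{thmLoop}. One small remark: your detour through quasi Siggers via Theorem~\ref{thmTaylorOmnibus} is unnecessary, because once $\algA$ has a \emph{specific} quasi Taylor term $t$, the particular system of identities it satisfies (with the question marks filled in) is already a strong height~1 Maltsev condition, and Lemma~\ref{lem:idempotent}(\ref{itm:h1}) applies to it directly---this is what the paper does. Your explicit verification of smoothness of $E'$ is a nice addition that the paper actually leaves implicit.
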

\begin{proof}
  Denote the digraph in question by $(A,E)$. Let $\algA$ be an algebra with a
  quasi Taylor term such that $E$ is admissible for $\algA$.
  Let $\algB$ be the idempotent algebra for $\algA$ from 
  Lemma~\ref{lem:idempotent} and let $\alpha$ be a unary term
  operation of $\algA$ whose image is $B$ from the same Lemma.

  We know that $\algA$ satisfies some quasi Taylor identity $\Sigma$.
  Since this particular identity is a strong Maltsev condition, it
  follows that $\algB$ also satisfies $\Sigma$. Since $\algB$ is
  idempotent, it has a Taylor term.

  Consider the digraph with the edge relation $F=E\cap B^2$ on $B$. Since the
  operations of $\algB$ are just restrictions of (some) operations
  from the clone of $\algA$, it follows that $F$ is an admissible
  relation for $\algB$. Moreover, $F$ has algebraic
  length 1, which we can prove by taking the $\alpha$-image of any closed walk of
  algebraic length 1 in $(A,E)$.
  
  Since the digraph $(B,F)$ admits a Taylor polymorphism,
  Theorem~\ref{thmLoop} gives us a loop in $F$. Given that $F\subset
  E$, it
  follows that there is a loop in $E$ as well.
\end{proof}

\section{Deciding the existence of a $k$-qWNU for a fixed $k$}%{{{1
\label{sec:main}
In this section we will define and provide an algorithm for two
computational problems.  In both of the problems the input algebra is
given by a list of tables of its (finitely many) basic operations.  The sum of the
sizes of these tables will be denoted by $\|\algA\|$.  We will assume
that the input algebra has at least one basic operation so that
$\|\algA\|\geq |A|$.

\begin{definition}
Define HAS-$k$-WNU-IDEMP to be the following decision problem:
\begin{description}
  \item[INPUT:] An \emph{idempotent} algebra~$\algA$ (on a finite set with
    finitely many basic operations).
  \item[QUESTION:] Does $\algA$ have a $k$-ary weak near unanimity operation?
\end{description}

Define HAS-$k$-qWNU to be the following decision problem:
\begin{description}
  \item [INPUT:] An algebra~$\algA$ (on a finite set with
    finitely many basic operations).
  \item [QUESTION:] Does $\algA$ have a $k$-ary quasi weak near unanimity operation?
\end{description}
\end{definition}
Note that in both problems the number $k$ is not a part of the input.
Indeed, the running time of our algorithm will depend exponentially on $k$ and 
we do not know if there is a polynomial time algorithm if $k$ is allowed to be
a part of the input (even if $k$ were written in the unary number system).
Also, both problems are trivial if $k=1$, so we will assume that
$k\geq 2$ in the rest of this section.

Note also that in HAS-$k$-WNU-IDEMP we demand that the input algebra
be idempotent. We do not know the complexity of HAS-$k$-WNU-IDEMP
should we drop the requirement that $\algA$ be idempotent, but we
guess that the problem is hard.

\begin{observation}\label{obsQuasi}
  The problem HAS-$k$-WNU-IDEMP reduces to HAS-$k$-qWNU.
\end{observation}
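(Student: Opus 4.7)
The plan is to observe that the identity map already serves as a valid reduction: on input an idempotent algebra $\algA$, we simply forward $\algA$ as an input to HAS-$k$-qWNU and return the same answer. All that needs to be checked is that $\algA$ has a $k$-ary WNU term operation if and only if $\algA$ has a $k$-ary qWNU term operation.

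The forward direction is immediate from the definitions, since every WNU is in particular a qWNU (the WNU identities and the qWNU identities are literally the same chain of equations, and the WNU version only adds idempotence on top).

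For the converse, the key step is the standard fact that in an idempotent algebra every term operation is idempotent. I would prove this by a short structural induction on terms: the variable term $x$ is trivially idempotent, and if $t = f(t_1, \dots, t_n)$ with $f$ a basic (hence idempotent) operation and each $t_i$ idempotent by the inductive hypothesis, then $t(x,\dots,x) = f(t_1(x,\dots,x),\dots,t_n(x,\dots,x)) = f(x,\dots,x) = x$. Consequently, if $w$ is any $k$-ary qWNU term operation of $\algA$, then $w(x,\dots,x) \approx x$ holds automatically, so $w$ is in fact a $k$-ary WNU.

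There is no real obstacle here; the only thing to note is that the reduction is obviously computable in polynomial (indeed linear) time, since it does not modify the input at all. This completes the reduction.
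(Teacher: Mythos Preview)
Your argument is correct and matches the paper's own proof essentially verbatim: the reduction is the identity map, and the only point to check is that in an idempotent algebra any qWNU term is automatically idempotent and hence a WNU. The paper leaves the ``every term operation of an idempotent algebra is idempotent'' step implicit, while you spell it out via induction on terms, but this is the same proof with a touch more detail.
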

\begin{proof}
  Let $\algA$ be an idempotent algebra. Since a WNU is just an
  idempotent qWNU, it
  follows that $\algA$ has a $k$-WNU if and only if it has a $k$-qWNU.
  Therefore, we can just
  run HAS-$k$-qWNU with the input $\algA$ to solve HAS-$k$-WNU-IDEMP.
\end{proof}

Given Observation~\ref{obsQuasi}, it is enough to find a polynomial time
algorithm for HAS-$k$-qWNU.  Our strategy for that will be to show that having
local $k$-qWNU terms implies that the input algebra actually has a $k$-qWNU
term. This will give us a polynomial time algorithm since we can check the
presence of local $k$-qWNU terms is in polynomial time (see the proof of
Theorem~\ref{thmWNU} for details).  However, in order to use
Corollary~\ref{corqLoop} we will need to show that the algebra in question has
a Taylor term first.

\begin{definition}
  We say that an algebra $\algA$ has \emph{local quasi Taylor
  operations} if for
  every $a,b\in B$ there exists a term operation $t_{a,b}$ of $\algA$ such that we
  can replace the question marks below by either $a$ or $b$ so that the 
  following equalities hold
\begin{align*}
  t_{a,b}(a,?,?,\dots,?)&= t_{a,b}(b,?,?,\dots,?)\\
  t_{a,b}(?,a,?,\dots,?)&= t_{a,b}(?,b,?,\dots,?)\\
		  &\vdots\\
  t_{a,b}(?,?,?,\dots,a)&= t_{a,b}(?,?,?,\dots,b).
\end{align*}
  If in addition each $t_{a,b}$ can be chosen to be idempotent we say that
  $\algA$ has \emph{local Taylor operations}.
\end{definition}
It turns out that the local to global principle works for (quasi)
Taylor operations.
\begin{lemma}\label{lemTaylorIdempotent}
       Let $\algA$ be a finite idempotent algebra with local Taylor
       operations. Then
       $\algA$ has a Taylor operation.
     \end{lemma}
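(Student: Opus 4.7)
The plan is to combine the given local Taylor operations into a single global Taylor operation via the quasi-loop lemma (Corollary~\ref{corqLoop}), applied to a smooth digraph of algebraic length~$1$ whose loops encode the Siggers identity.

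First, since $\algA$ is idempotent, Theorem~\ref{thmTaylorOmnibus} tells us that producing a Taylor term is equivalent to producing a $4$-ary Siggers term $s$ satisfying $s(r,a,r,e) \approx s(a,r,e,a)$. This reduces the goal to a single fixed height-$1$ identity in three variables, which can be encoded via a subpower construction: I would work inside the subpower $\algC \leq \algA^{A^3}$ generated by the three coordinate tuples $\tupl r, \tupl a, \tupl e \in A^{A^3}$ (indexed by triples $(r,a,e) \in A^3$), so that a Siggers term of $\algA$ is exactly a term $s$ with $s(\tupl r, \tupl a, \tupl r, \tupl e) = s(\tupl a, \tupl r, \tupl e, \tupl a)$ in $\algC$.

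Next I would construct a smooth digraph $G$ on (a subpower of) $\algC$ whose edges encode the Siggers-identification requirement, adding reverse edges to secure smoothness, so that a loop in $G$ corresponds to a Siggers term for $\algA$. The local Taylor hypothesis is used to verify that $G$ has algebraic length~$1$: for each pair $a,b \in A$ the term $t_{a,b}$ provides equalities between specific tuples on $\{a,b\}$-valued inputs, and these equalities translate into walks in $G$ of controlled algebraic length. Combining these walks across all pairs (glued together using idempotence of $\algA$) should produce a closed walk of algebraic length~$1$. The digraph admits $\algA$ as polymorphisms by construction because it is built inside a subpower, and the local Taylor operations $t_{a,b}$ themselves witness a quasi Taylor polymorphism on~$G$, which is what Corollary~\ref{corqLoop} requires. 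Applying the corollary then delivers a loop in $G$, and this loop, by the encoding above, yields the desired Siggers term, hence a Taylor term of~$\algA$.

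\textbf{The main obstacle} lies in the digraph construction and the proof that it has algebraic length~$1$: one must combine walks produced by different local Taylor operations at different pairs into a single closed walk of algebraic length~$1$, exploiting both idempotence and the finiteness of $A$. A further delicate point is that Corollary~\ref{corqLoop} is phrased in terms of a quasi Taylor polymorphism, while in the present idempotent setting quasi Taylor coincides with Taylor; one must therefore be careful that the polymorphism used to invoke the corollary is genuinely supplied by the local Taylor data and not silently imported from the conclusion we are trying to prove. This is the point at which the interaction between local Taylor operations and the idempotence of $\algA$ does the real work.
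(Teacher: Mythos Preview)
Your proposal has a genuine circularity problem. Corollary~\ref{corqLoop} requires that the digraph $G$ admit a quasi Taylor polymorphism. Since $G$ is built inside a subpower of $\algA$, the polymorphisms you have access to are the term operations of $\algA$; but the existence of a (quasi) Taylor term operation of $\algA$ is precisely the conclusion you are trying to establish. You flag this as a ``delicate point'' and suggest that the local Taylor operations $t_{a,b}$ supply the needed polymorphism, but they do not: a quasi Taylor polymorphism is a \emph{single} operation satisfying the Taylor identities for all inputs, whereas the local data hands you a different operation for each pair $(a,b)$, each one satisfying the equalities only at that pair. Nothing in your outline combines these into one operation prior to invoking the loop lemma, and doing so is essentially the content of the lemma itself.

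The paper sidesteps this entirely by appealing to a different characterization: a finite idempotent algebra has a Taylor term if and only if $HS(\algA)$ contains no two-element algebra whose term operations are all projections (Bulatov--Jeavons). If such a two-element quotient $\algB/\theta$ existed, pick $r,s\in B$ in distinct $\theta$-classes; the local Taylor term $t_{r,s}$ must act as a projection on $\algB/\theta$, say onto the first coordinate, and then the first local Taylor equality forces $r/\theta=s/\theta$, a contradiction. This argument is short and never touches the loop lemma. In the paper's logical order, Lemma~\ref{lemTaylorIdempotent} (and its Corollary~\ref{corQTaylor}) are what \emph{later} justify invoking the loop lemma in Lemma~\ref{lemLocalKWNU}, not the other way around.
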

     \begin{proof}
      We use the fact that
    for a finite idempotent algebra $\algA$, having a Taylor term is
    equivalent to there \textbf{not} being a two element algebra in
    $HS(\algA)$ (the class of homomorphic images of subalgebras of
    $\algA$) whose term operations are projection maps.
       This was first established by A.
       Bulatov and P. Jeavons~\cite[Proposition 4.14]{bulatov-jeavons-hsa}; see also~\cite[Lemma
    9.4 and Theorem 9.6]{conm} and~\cite[Proposition
    3.1]{valeriote-intprop}.
       
       So, suppose to the contrary, that $HS(\algA)$ contains a
    two element algebra; let this algebra be the quotient of
       $\algB$ of $\algA$ by the congruence $\theta$. Take two elements $r$, $s \in B$ with $(r,s) \notin
    \theta$.  Let $t_{r,s}$ be a local Taylor term for for $r$ and $s$ in
       $\algA$.

    By assumption, the term $t_{r,s}$ on $\algB/\theta$ is a
    projection map, say onto its first variable (without loss of
       generality).  But then we have (the question marks stand for one of
       $r,s$ according to the first local Taylor equality for $t_{r,s}$)
  \begin{align*}
  s/\theta  &= t_{r,s}(s/\theta, ?/\theta, \dots, ?/\theta) =t_{r,s}(s, ?,
    \dots, ?)/\theta \\
  &=t_{r,s}(r,?,  \dots, ?)/\theta = t_{r,s}(r/\theta, ?/\theta, \dots, ?/\theta) = r/\theta,
  \end{align*}
  a contradiction.
     \end{proof}
\begin{corollary}\label{corQTaylor}
     Let $\algA$ be a finite algebra with local quasi Taylor
     operations. Then
$\algA$ has a quasi Taylor operation.
\end{corollary}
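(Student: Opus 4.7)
The plan is to reduce the statement to its idempotent counterpart, Lemma~\ref{lemTaylorIdempotent}, using the machinery of Lemma~\ref{lem:idempotent}. First I invoke Lemma~\ref{lem:idempotent} on $\algA$ to obtain an idempotent algebra $\algB$ on a subset $B\subset A$, together with a unary term operation $\alpha$ of $\algA$ with image $B$ satisfying $\alpha\circ\alpha=\alpha$.

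The main step is to transfer the local quasi Taylor operations of $\algA$ into honest local Taylor operations of $\algB$. Fix $a,b\in B$ and let $t_{a,b}$ be the local quasi Taylor term witnessing the property for the pair $a,b$ in $\algA$. By Lemma~\ref{lem:idempotent}(c) there is a unary term operation $\tau$ of $\algA$ such that $u_{a,b}\equals\tau\circ t_{a,b}$, restricted to $B$, lies in the clone of $\algB$. Applying $\tau$ to both sides of each local quasi Taylor equality for $t_{a,b}$ preserves those equalities, so $u_{a,b}$ still satisfies the local Taylor identities at $a,b$. Because $\algB$ is idempotent every operation in its clone is idempotent, so $u_{a,b}$ restricted to $B$ is an \emph{idempotent} local Taylor term for $a,b$ in $\algB$. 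Hence $\algB$ has local Taylor operations.

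Lemma~\ref{lemTaylorIdempotent} then supplies a Taylor term of $\algB$. Forgetting the idempotence requirement, the Taylor identities that this term satisfies form a strong Maltsev condition $\Sigma$ of height 1, and $\algB$ satisfies $\Sigma$. By Lemma~\ref{lem:idempotent}(b) the algebra $\algA$ also satisfies $\Sigma$, which is precisely to say that $\algA$ has a quasi Taylor operation.

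The only place where care is needed is ensuring that the rescaling $\tau$ delivered by Lemma~\ref{lem:idempotent}(c) does not destroy the local Taylor equalities; this is where I would slow down, but it is immediate because $\tau$ is unary and is applied uniformly to both sides of each identity. Idempotence of the resulting operations on $B$ is automatic from the idempotence of $\algB$, so the local quasi Taylor equations become local Taylor equations with no extra work, and the appeal to the idempotent version of the result goes through.
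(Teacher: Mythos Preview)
Your proof is correct and follows essentially the same approach as the paper: pass to the idempotent retract $\algB$ via Lemma~\ref{lem:idempotent}, use part~(\ref{itm:composition}) to turn each local quasi Taylor term $t_{a,b}$ into an idempotent local Taylor term $\tau\circ t_{a,b}$ of $\algB$, apply Lemma~\ref{lemTaylorIdempotent}, and transfer the resulting height~1 condition back via part~(\ref{itm:h1}). Your write-up is in fact slightly cleaner than the paper's, which opens with an unnecessary proof-by-contradiction framing; you also make explicit the point that idempotence must be dropped before invoking part~(\ref{itm:h1}) since $t(x,\dots,x)\approx x$ is not height~1.
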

\begin{proof}
  Let $\algA$ be a finite algebra with local Taylor terms, but no quasi
  Taylor operation. Let $\algB$ be the algebra from
  Lemma~\ref{lem:idempotent} for $\algA$. Since $\algA$ and $\algB$ satisfy the
  same strong height 1 Maltsev conditions, if we show that
  $\algB$ has a Taylor operation, we shall get that $\algA$ has a quasi
  Taylor operation. 

  Given Lemma~\ref{lemTaylorIdempotent}, it is enough to show that $\algB$ has
  local Taylor terms. But that is easy: Let $b,c\in B$ and let $t_{b,c}$ be 
  the local quasi Taylor term for $b,c$ in $\algA$. By
  part~(\ref{itm:composition}) of Lemma~\ref{lem:idempotent} there
  exists a $\tau$ such that $\tau\circ t_{b,c}$ restricted to $B$ is
  a term operation of $\algB$. Applying $\tau$ to the both sides of
  the equalities
\begin{align*}
  t_{b,c}(a,?,?,\dots,?)&= t_{b,c}(b,?,?,\dots,?)\\
  t_{b,c}(?,a,?,\dots,?)&= t_{b,c}(?,b,?,\dots,?)\\
		  &\vdots\\
  t_{b,c}(?,?,?,\dots,a)&= t_{b,c}(?,?,?,\dots,b).
\end{align*}
  gives us the equalities
\begin{align*}
  \tau\circ t_{b,c}(a,?,?,\dots,?)&= \tau\circ t_{b,c}(b,?,?,\dots,?)\\
  \tau\circ t_{b,c}(?,a,?,\dots,?)&= \tau\circ t_{b,c}(?,b,?,\dots,?)\\
		  &\vdots\\
  \tau\circ t_{b,c}(?,?,?,\dots,a)&= \tau\circ t_{b,c}(?,?,?,\dots,b).
\end{align*}
  Since $\tau\circ t_{b,c}$ is idempotent by the choice of $\tau$, we
  see that $\tau\circ t_{b,c}$ is a local
  quasi Taylor operation for $b,c$ in $\algB$.
\end{proof}

\begin{definition}\label{defLocalKWNU}
Let $n\in \en$, $k\geq 2$.
  An algebra $\algA$ has \emph{$n$-local $k$-qWNU terms} if for every
      choice of $n$-tuples $\vect r,\vect s\in A^n$
      there is a $k$-ary term operation $t_{\vect r,\vect s}$ of $\algA$ such that
\[
t_{\vect r,\vect s}(\vect s, \vect r, \dots, \vect r)= t_{\vect r,\vect s}(\vect r, \vect s,\dots,\vect r)=\dots=t_{\vect r,\vect s}(\vect r, \vect r,\dots,\vect s),
\]
where the term operation  $t_{\vect r,\vect s}$ is applied coordinate-wise to
  the given $n$-tuples.  We call such a term an $n$-local $k$-qWNU term  of $\algA$ for $\vect s$ and $\vect r$.
  \end{definition}
Observe that $n$-local $k$-qWNU terms are a special case of local quasi Taylor terms.

        It is elementary to show that $\algA$ has $n$-local $k$-qWNU terms if and
	only if for each pair of $n$-tuples $\vect r,\vect s\in A^n$
      (which we will write as
      column tuples) the subuniverse $R$ of $\algA^{kn}$
      generated by the column vectors of the matrix
\[
   \left(\begin{matrix}
       \vect s&\vect r&\dots&\vect r&\vect r\\
       \vect r&\vect s&\dots&\vect r&\vect r\\
		     &&\ddots\\
       \vect r&\vect r&\dots&\vect s&\vect r\\
       \vect r&\vect r&\dots&\vect r&\vect s\\
     \end{matrix}
   \right)
     \]
     contains a tuple of the form $(\vect u,\vect u,\dots,\vect u)$ for some
     $\vect u\in A^n$.

  Furthermore, large enough local $k$-qWNUs are actually global: For
  an an algebra $\algA$ with the universe $A$ having $|A|^2$-local $k$-qWNU terms 
  is the same thing as having a $k$-qWNU term. All that remains is to bridge the gap between 1-local and $|A|^2$-local $k$-qWNU
     terms.
     
     \begin{lemma}\label{lemLocalKWNU}
    Let $\algA$ be a finite algebra and let $n \ge 1$. If $\algA$
    has $n$-local $k$-qWNU terms then
    $\algA$ also has $(n+1)$-local $k$-qWNU terms.
  \end{lemma}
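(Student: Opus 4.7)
The plan is to show that for each pair $\vect{r}, \vect{s} \in A^{n+1}$ the subpower $R \subset \algA^{k(n+1)}$ generated by the $k$ defining column tuples contains a tuple in which all $k$ blocks are equal; such a tuple encodes exactly an $(n+1)$-local $k$-qWNU term for $(\vect{r}, \vect{s})$. Write $\vect{r} = (\vect{r}', r_{n+1})$ and $\vect{s} = (\vect{s}', s_{n+1})$ with $\vect{r}', \vect{s}' \in A^n$, and apply the $n$-local $k$-qWNU term $t' := t_{\vect{r}', \vect{s}'}$ coordinate-wise to the $k$ generators of $R$, doing so for every reordering of the arguments. This yields, for each permutation $\pi$ of $\{1, \dots, k\}$, a tuple $T_\pi \in R$ whose $i$th block has the form $(\vect{u}', v_{\pi(i)})$ for some fixed $\vect{u}' \in A^n$ and values $v_1, \dots, v_k \in A$ determined by $t'$.

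Equivalently, the subuniverse $S \subset \algA^k$ obtained by projecting $R \cap D$ (where $D$ forces agreement of the first $n$ coordinates across all blocks) onto the last coordinate of each block is closed under coordinate permutation and contains every rearrangement of $(v_1, \dots, v_k)$. A constant tuple in $S$ lifts to an all-blocks-equal tuple in $R$, so it suffices to produce one. Since $n$-local $k$-qWNUs are a special case of local quasi Taylor operations, Corollary~\ref{corQTaylor} provides a quasi Taylor term for $\algA$ and Corollary~\ref{corqLoop} becomes available. I would apply it to the binary projection $E \subset A^2$ of $S$ onto coordinates $1$ and $2$: this relation is smooth, symmetric (via the transposition swapping these two coordinates), admits $\algA$, and contains $(v_i, v_j)$ for all $i \ne j$. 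If $\{v_1, \dots, v_k\}$ has a repetition then $E$ contains a loop directly, and if it has at least three distinct values the triangle they span witnesses algebraic length $1$; in either case Corollary~\ref{corqLoop} produces a loop $(y, y) \in E$, which lifts to a tuple in $S$ with two prescribed coordinates equal.

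Completing the argument requires iterating the loop-lemma step on suitable projections of $S$ intersected with equality constraints, combined with applications of $1$-local $k$-qWNU terms (obtained from $n$-local by specializing to constant $n$-tuples) to collapse the remaining coordinates. The case $k = 2$ with $v_1 \ne v_2$ has to be handled separately because then $E = \{(v_1, v_2), (v_2, v_1)\}$ has no triangle, no loop, and no closed walk of algebraic length $1$; here the loop lemma is bypassed and one applies $t_{v_1, v_2}$ directly to $(T_1, T_2)$, whose commutativity on $(v_1, v_2)$ yields an all-blocks-equal tuple in $R$. The principal obstacle I anticipate is making the iteration rigorous when the multiplicities of the values $v_i$ fall strictly between $1$ and $k - 1$: the $1$-local qWNU terms then no longer collapse the tuples in a single step, and one must carry out further loop-lemma steps on shrinking projections while keeping careful track of non-idempotent behaviour.
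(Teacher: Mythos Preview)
Your setup matches the paper's: you correctly define the permutation-closed subuniverse $S\subset \algA^k$ (the paper arrives at the same $S$), reduce the problem to finding a constant tuple in $S$, and observe that Corollary~\ref{corQTaylor} makes Corollary~\ref{corqLoop} available. The gap is exactly where you flag it. After one application of the loop lemma to the binary projection you obtain some $(y,y,w_3,\dots,w_k)\in S$, but the natural candidates for a ``smaller'' problem do not inherit the structure you need. The relation $\{(x_1,x_3,\dots,x_k):(x_1,x_1,x_3,\dots,x_k)\in S\}$ is a subuniverse but is \emph{not} invariant under permutations exchanging $x_1$ with some $x_j$; the relation $\{(x_3,\dots,x_k):\exists x,\ (x,x,x_3,\dots,x_k)\in S\}$ is symmetric but has arity $k-2$, so the $1$-local $k$-qWNU terms no longer match it, and a constant there only hands you back a two-valued tuple in $S$. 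For $k\ge 4$ this does not visibly terminate, and your case split on multiplicities does not resolve it either.

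The paper sidesteps the iteration with a single, sharper use of the loop lemma. Having $(b_1,\dots,b_k)\in S$, it builds a digraph $G$ on $(k{-}2)$-tuples with the ``shift'' edge relation
\[
  (v_1,\dots,v_{k-2})\to(v_2,\dots,v_{k-1})\quad\text{whenever }\exists z,\ (v_1,\dots,v_{k-1},z)\in S,
\]
so that a loop in $G$ is exactly a witness that $S$ contains a tuple of the form $(e,\dots,e,f)$. Smoothness and compatibility with $\algA$ are routine; algebraic length $1$ comes from two directed cycles through $(b_1,\dots,b_{k-2})$: the length-$k$ cycle coming from $(b_1,\dots,b_k)\in S$ itself, and a length-$(k{-}1)$ cycle coming from the permuted tuples $(b_{1+j},\dots,b_{(k-1)+j},b_k)\in S$ with indices read modulo $k-1$. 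One invocation of Corollary~\ref{corqLoop} then yields $(e,\dots,e,f)\in S$, and a single application of the $1$-local $k$-qWNU term $t_{e,f}$ to the $k$ cyclic rotations of this tuple produces the constant. No case analysis on $k$ or on the multiplicities of the $v_i$ is required.
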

  \begin{proof}
        Take $\vect r,\vect s\in A^{n}$ and $c,d\in A$.
    We want to show that
    the subpower of $\algA^{(n+1)k}$ generated by the columns of the
    matrix
 \[
   \begin{pmatrix}
       \vect s&\vect r&\dots&\vect r&\vect r\\
	c&d&\dots&d&d\\
       \vect r&\vect s&\dots&\vect r&\vect r\\
	d&c&\dots&d&d\\
		     &&\ddots\\
       \vect r&\vect r&\dots&\vect s&\vect r\\
	d&d&\dots&c&d\\
       \vect r&\vect r&\dots&\vect r&\vect s\\
	d&d&\dots&d&c\\
    \end{pmatrix}
       \]
       contains a tuple of the form $(\vect u, e,\vect u,e,\dots)$ for
       some $\vect u\in A^n$ and $e \in A$. By the $n$-local $k$-WNU property, we find that there
       is an $\vect a\in A^n$ and elements $b_1,\dots,b_k\in A$ such that
\[
   (\vect a,b_1, \vect a,b_2,\vect a,b_3,\dots,\vect a,b_k)\in R.
\]
    Inspired by the tuple above, let us consider the relation $S$ that we get from $R$ by the following
formula
\[
  S=\{(x_1,\dots,x_k)\colon \exists \vect c\in A^n,\, (\vect c,x_1,\vect
    c,x_2,\dots,\vect c,x_k)\in R\}.
\]
    It is easy to verify that $S$ is a subpower of $\algA$ (either
    from the definition or by observing that $S$ is defined from the
    admissible relation $R$ using a primitive positive formula).
We know that $(b_1,\dots,b_k)\in S$. Moreover, since we can permute
    the generators of $R$, it follows that if $(x_1,\dots,x_k)\in S$ and $\pi$ is a permutation of $[k]$, then
$(x_{\pi(1)},\dots,x_{\pi(k)})\in S$.  As an intermediate step in our proof, we will show that $S$ contains a tuple of the form $(e, e, \dots, e, f)$ for some $e$, $f \in A$.

We define the following digraph $G$: The vertex set of $G$ is
\[
  V=\{(v_1,\dots,v_{k-2})\colon \exists y,z\in A,\,
    (v_1,\dots,v_{k-2},y,z)\in S\}
\]
and the edge relation is
    \begin{align*}
      E=\{((v_1,v_2,\dots,v_{k-2}),&(v_2,v_3,\dots,v_{k-1}))\colon\\\exists z\in
      A,&\,
(v_1,v_2,\dots,v_{k-2},v_{k-1},z)\in S\}.
    \end{align*}
We want to show that $G$ contains a loop, since any loop of $G$ will witness that there
is a tuple in $S$ of the form $(e, e, \dots, e, f)$ for some $e$, $f \in A$. To find a loop, we
want to apply Corollary~\ref{corqLoop} to $G$. To do that we need to
    prove that $G$ is a smooth digraph of algebraic length 1 that
    admits a quasi Taylor polymorphism. 

    Let $t$ be a quasi Taylor term operation of $\algA$. As with $S$,
    it is easy to
    verify that both $V$ and $E$ are subpowers of $\algA$; hence $t$
    applied coordinate-wise is a quasi Taylor polymorphism of $G$.

    To see that $G$ is smooth, consider some $(v_1,\dots,v_{k-2})\in
    V$. By definition, there are $y,z$ such that
    $(v_1,\dots,v_{k-2},y,z)\in S$. By definition of $E$, we
    immediately get that there is an edge from
    $(v_1,\dots,v_{k-2})$ to $(v_2,\dots,v_{k-2},z)$. Since $S$
    is invariant under permutations, we also have
    $(z,v_1,\dots,v_{k-2},y)\in S$. Hence, there is an edge in $G$
    from $(z,v_1,\dots,v_{k-3})$ to $(v_1,\dots,v_{k-2})$, concluding
    the proof of smoothness of $G$.
    
To show that $G$ has algebraic length 1, we find two cycles of lengths
$k-1$ and $k$ that start at the vertex $(b_1, \dots, b_{k-2})$. Using $(b_1, \dots, b_k)\in S$,
we get the directed cycle of length $k$ in $G$ with vertices
\[
(b_1,\dots,b_{k-2}), (b_2,\dots,b_{k-1}),
(b_3,\dots,b_{k}),  \dots, (b_k,b_1,\dots,b_{k-3}).
\]
Since $S$ is invariant under permutations, we know that for each $m\in\en$ we also have
\[
(b_{1+m},b_{2+m},\dots,b_{k-1+m},b_k)\in S
\]
where the addition is modulo
$k-1$. This gives us the direct cycle of length $k-1$ in $G$ with vertices
\[
(b_1,\dots,b_{k-2}), (b_2,\dots,b_{k-1}),
(b_3,\dots,b_{k-1},b_1), \dots, (b_{k-1},b_1,\dots,b_{k-3}).
\]
Taken together, these two cycles imply that the algebraic length of $G$ is 1
and thus, by Theorem~\ref{thmLoop}, $G$ contains a loop and so $S$ contains a
tuple of the form $(e,\dots,e,f)$. The symmetry of $S$ gives us that $S$ contains all tuples of
the form $(e,e,\dots,e,f,e,\dots,e)$.  

Since $\algA$ has $n$-local $k$-qWNU operations, it follows that
$\algA$ has a $1$-local $k$-qWNU operations. Applying the 1-local $k$-qWNU
operation $t_{e,f}$ to $S$, we conclude that $S$ contains a constant tuple $(g,g, \dots, g)$ for
some $g \in A$.  From this, it follows that $R$ contains a tuple of
the form $(\vect c,g,\vect c,g,\dots,\vect c,g)$, showing that $\algA$
has $(n+1)$-local $k$-WNU terms and we are done.
\end{proof}

\begin{theorem}\label{thmWNU}
  The problem HAS-$k$-qWNU is solvable in polynomial
  time.
\end{theorem}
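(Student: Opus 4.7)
My plan is to establish that $\algA$ has a $k$-qWNU operation if and only if $\algA$ has $1$-local $k$-qWNU terms, and then to verify this equivalent condition by brute force in polynomial time. The forward direction is immediate from the definitions. For the converse, note that any $1$-local $k$-qWNU term for a pair $\{a,b\}$ is in particular a local quasi Taylor term for $\{a,b\}$, so Corollary \ref{corQTaylor} supplies a global quasi Taylor term. This is the ingredient implicitly required by the proof of Lemma \ref{lemLocalKWNU}, which I then iterate $|A|^2-1$ times to pass from $1$-local up through $2$-local and on to $|A|^2$-local $k$-qWNU terms. As observed just before Lemma \ref{lemLocalKWNU}, choosing $\vect r$ and $\vect s$ so that their coordinates enumerate all pairs $(x,y)\in A\times A$ converts a single $|A|^2$-local $k$-qWNU term into the full chain of global $k$-qWNU identities on $\algA$.

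For the algorithm, I would loop over each of the $|A|^2$ pairs $(r,s)\in A^2$ and compute the subuniverse $R_{r,s}$ of $\algA^k$ generated by the $k$ columns of the $k\times k$ matrix displayed after Definition \ref{defLocalKWNU}; the algorithm outputs YES iff every such $R_{r,s}$ contains a constant tuple $(u,u,\dots,u)$. The subuniverse has at most $|A|^k$ elements and can be built by the standard closure procedure: maintain a growing set $T\subset A^k$ and repeatedly apply each basic operation of $\algA$ coordinate-wise to tuples drawn from $T$, adding any new results. Because $k$ is fixed, a single closure completes in time polynomial in $\|\algA\|$ (the number of rounds is at most $|A|^k$, and each round processes at most $|T|^{\arity(f)}\le \|\algA\|^{O(1)}$ inputs per basic operation $f$), and scanning for a constant tuple costs another $O(|A|^k)$. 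Multiplying by the $|A|^2$ outer iterations keeps the total running time polynomial in $\|\algA\|$.

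The genuinely hard work has already been carried out by Corollary \ref{corqLoop}, Corollary \ref{corQTaylor}, and above all Lemma \ref{lemLocalKWNU}; together they deliver the bootstrap from $1$-local to global. Given these, no further conceptual obstacle remains, and the only verification needed is the polynomial-time bound on the closure computation in $\algA^k$, which follows from fixing $k$ and using $|R_{r,s}|\le |A|^k$.
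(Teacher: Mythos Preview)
Your proposal is correct and follows essentially the same route as the paper: reduce HAS-$k$-qWNU to the existence of $1$-local $k$-qWNU terms via Lemma~\ref{lemLocalKWNU} (iterated up to $|A|^2$), and then check that condition by generating, for each pair $(r,s)\in A^2$, the subpower of $\algA^k$ generated by the cyclic tuples and scanning for a constant tuple. Your explicit invocation of Corollary~\ref{corQTaylor} to supply the quasi Taylor term used inside Lemma~\ref{lemLocalKWNU} just makes transparent what the paper leaves implicit, and your closure-based running-time estimate is a hands-on version of the paper's citation of~\cite[Proposition~6.1]{freese-valeriote-maltsev-conditions}.
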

\begin{proof}
  By Lemma~\ref{lemLocalKWNU} we just need to test if $\algA$ has
  1-local $k$-qWNU terms. As noted just after
  Definition~\ref{defLocalKWNU}, this amounts to testing if for each
  $r$, $s \in A$ the subpower of $\algA$ generated by the $k$-tuples
  \[
  (s,r,r, \dots, r), (r,s, r, \dots, r), \dots, (r, r, r, \dots, s)\]
  contains a constant tuple.  By \cite[Proposition 6.1]{freese-valeriote-maltsev-conditions}, we can generate this
  subpower (for fixed $r,s$) by an algorithm whose run-time is
  $O(m\|\algA\|^k)$. Here $m$ is the largest arity of a basic operation of
  $\algA$. Since this test needs to be performed for each pair of elements
  $(r,s)$ from $A^2$, we conclude that testing for a $k$-qWNU term can be
  carried out by an algorithm whose run-time is $O(m|A|^2\|\algA\|^k)$, which
  is polynomial in $\|\algA\|$.
\end{proof}

\begin{remark}
  One might wonder whether the structural results in
  Lemma~\ref{lemTaylorIdempotent}, Corollary~\ref{corQTaylor} and
  Lemma~\ref{lemLocalKWNU} remain true when we drop the requirement that $\algA$ is
  finite. We give a counterexample to the infinite versions of 
  Lemma~\ref{lemTaylorIdempotent} and Corollary~\ref{corQTaylor}. We suspect that
  the infinite version of Lemma~\ref{lemLocalKWNU} also fails to be true, but we
  do not have a proof.
\end{remark}

\begin{proposition}
  There exists an infinite idempotent algebra with binary local Taylor
  operations but without a global Taylor operation.
\end{proposition}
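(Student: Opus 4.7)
My plan is to build an infinite idempotent algebra whose basic operations behave as semilattice meets on their own two-element subsets and as the first projection everywhere else. By construction the local Taylor requirement will be trivial, but every term operation will act as a projection on a cofinite part of $A^n$, which rules out any global Taylor identity. Concretely I would take any infinite set $A$ and, for each two-element subset $\{r,s\}\subset A$, designate one element (say $r$) as the ``bottom'' and introduce the binary operation $m_{r,s}$ defined by $m_{r,s}(r,r)=m_{r,s}(r,s)=m_{r,s}(s,r)=r$, $m_{r,s}(s,s)=s$, and $m_{r,s}(x,y)=x$ whenever $\{x,y\}\not\subset\{r,s\}$. Each $m_{r,s}$ is idempotent, so the resulting algebra $\algA$ is idempotent; and for $a\neq b$ the operation $m_{a,b}$ immediately witnesses binary local Taylor for the pair $\{a,b\}$ by taking both question marks to equal the chosen bottom (the case $a=b$ being trivial).

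The main step is to show that $\algA$ admits no global Taylor operation. I would prove by structural induction on terms the following lemma: for every $n$-ary term operation $t$ of $\algA$ there is a coordinate $i(t)\in\{1,\dots,n\}$ and a finite set $F_t\subset A$ such that $t(a_1,\dots,a_n)=a_{i(t)}$ whenever $a_1,\dots,a_n\in A\setminus F_t$. The base case of a projection is clear. For $t=m_{r,s}(u,v)$ I would set $F_t=F_u\cup F_v\cup\{r,s\}$ and $i(t)=i(u)$: inputs outside $F_t$ force the subterm $u$ to return a value $a_{i(u)}\notin\{r,s\}$, so $m_{r,s}$ then collapses to its first argument.

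Granted the lemma, suppose for contradiction that some $t$ is Taylor. For the coordinate $i(t)$ some identity $t(?,\dots,x,\dots,?)\approx t(?,\dots,y,\dots,?)$ (with $x,y$ in position $i(t)$ and the other question marks taken from $\{x,y\}$) holds in $\algA$; substituting two distinct elements $a,b\in A\setminus F_t$ for $x,y$ keeps every coordinate inside $A\setminus F_t$, so the lemma forces the two sides to evaluate to $a$ and $b$ respectively, a contradiction. The only subtle point along the way is the bookkeeping: each $F_t$ must remain finite under composition, which is automatic since finite unions of finite sets are finite, so no real obstacle should arise.
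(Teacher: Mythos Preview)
Your argument is correct and takes a genuinely different route from the paper. The paper works with a single binary operation on the nonnegative integers, namely $f(x,y)=x$ if $x=y$ and $f(x,y)=\max(1,x-1)$ otherwise; local Taylor terms arise because iterating $f$ drives any pair down to $(1,1)$, while the absence of a global Taylor term follows from the arithmetic bound that a term with $n$ occurrences of $f$ can move the leftmost variable by at most $n$. Your construction instead installs a separate semilattice-on-two-points operation $m_{r,s}$ for every pair and then proves the pleasant structural lemma that each term operation agrees with a fixed projection outside a finite exceptional set. What you gain is a very transparent, purely combinatorial reason why no Taylor identity can hold (just evaluate outside $F_t$), and the argument works over an arbitrary infinite carrier; what the paper's example buys is a witness with a \emph{single} basic operation, so the counterexample even lives in a finite signature, which is a somewhat sharper statement. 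Your inductive step is fine as written: once all inputs avoid $F_u\cup\{r,s\}$ the value of $u$ is some $a_{i(u)}\notin\{r,s\}$, forcing $m_{r,s}$ to project to its first argument regardless of $v$, so including $F_v$ in $F_t$ is harmless bookkeeping rather than a necessity.
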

\begin{proof}
  Consider the algebra
  $\algA$ on
  the set of nonnegative integers with one binary operation $f$ defined as:
    \[
    f(x,y)=\begin{cases}
      x&\text{if $x=y$,}\\
      \max(1,x-1)&\text{else.}
    \end{cases}
    \]
  To see that $\algA$ has local Taylor terms, consider any pair of distinct
  nonnegative integers $a,b$ and let $E$ be the
  the subuniverse of $\algA^2$ generated by $(a,b)$ and $(b,a)$. Applying $f$
  to the generators, we see that $E$ contains the tuple $(\max(a-1,1),\max(b-1,1))$. Applying $f$ to
  $(\max(a-1,1),\max(b-1,1))$ and $(a,b)$, we obtain that $E$ contains $(\max(a-2,1),\max(b-2,1))$. Continuing in
  this manner, we eventually get that $E$ contains the pair $(1,1)$, yielding 
  a binary local Taylor operation for $a$ and $b$. 

  To show that $\algA$ has no global Taylor term, we need to examine the the
  term operations of $\algA$ first. It is easy to see that $f(x,y)\in
  \{x,x-1\}$ for any $x,y\in\en$. From this it follows by induction on term
  complexity that when $t$ is an operation of $\algA$ whose term contains $n$
  occurrences of the symbol $f$ then for all $x_1,\dots,x_n$ we have
  $t(x_1,\dots,x_n)\in\{x_i-n,x_i-n+1,\dots,x_i\}$ where $x_i$ is the leftmost
  variable in a term for $t$.

  Let $t$ be a term operation of $\algA$ whose term contains $n$ occurrences of
  $f$. Assume without loss of generality that $x_1$ is the leftmost
  variable in the term for $t$. We now claim that $t$ fails to satisfy any Taylor term
  identity of the form $t(x,?,\dots,?)\approx t(y,?,\dots,?)$. To see this, let
  us choose $x,y\in \en$ so that $x-n>y$. By the previous paragraph we have
  for any choice of $x$ or $y$ in place of question marks
  \begin{align*}
    t(x,?,\dots,?)&\in \{x-n,x-n+1,\dots,x\}\\ 
    t(y,?,\dots,?)&\in
    \{y-n,y-n+1,\dots,y\}.
  \end{align*}
  However, since $x-n>y$, we have that the sets 
$\{x-n,x-n+1,\dots,x\}$ and  $\{y-n,y-n+1,\dots,y\}$ are disjoint and so the
identity $t(x,?,\dots,?)\approx t(y,?,\dots,?)$ cannot hold no matter what the
  question marks are. We conclude that $\algA$ has no Taylor terms.
\end{proof}

\section{Deciding quasi Taylor terms}
We end our paper by an observation that connects our result to a
related problem of deciding if an input algebra has a (quasi) WNU operation.

\begin{definition}
Define HAS-TAYLOR-IDEMP to be the following decision problem:
\begin{description}
  \item[INPUT:] An \emph{idempotent} algebra~$\algA$ (on a finite set with
    finitely many basic operations).
  \item[QUESTION:] Does $\algA$ have a Taylor term?
\end{description}

Define HAS-QTAYLOR to be the following decision problem:
\begin{description}
  \item[INPUT:] An algebra~$\algA$ (on a finite set with
    finitely many basic operations).
  \item[QUESTION:] Does $\algA$ have a quasi Taylor term?
   \end{description}
\end{definition}

Note that by Theorem~\ref{thmTaylorOmnibus}, the ``yes'' instances of
HAS-TAYLOR-IDEMP are exactly
those idempotent algebras that have some WNU operation and the ``yes'' instances
of HAS-QTAYLOR are exactly algebras with some qWNU operations.
However, HAS-QTAYLOR is a different problem than the problem of
deciding whether the variety generated by $\algA$ omits type 1.
Omitting type 1 is equivalent to having a Taylor term and this problem is 
$\compEXPTIME$-complete~\cite[Corollary
9.3]{freese-valeriote-maltsev-conditions}.

The problem HAS-TAYLOR-IDEMP is known to be in $\compP$~\cite[Theorem
6.3]{freese-valeriote-maltsev-conditions}.  However, should we drop
the requirement that the input algebra be idempotent, HAS-TAYLOR-IDEMP
would become $\compEXPTIME$-complete as mentioned in the previous
paragraph. We will show by combining
several known results that HAS-QTAYLOR lies the class $\compP$.

\begin{observation}\label{obsSiggers}
  A finite algebra $\algA$ has a quasi Taylor operation if and only if for
  every $a,b\in A$ the subalgebra of $\algA^4$ generated by the columns of the
  matrix
\[
   \begin{pmatrix}
     a&b&b&b\\
     b&a&b&b\\
     b&b&a&b\\
     b&b&b&a\\
      \end{pmatrix}
 \]
  contains a tuple of the form $(q,q,r,r)$ for some $q,r\in A$.
\end{observation}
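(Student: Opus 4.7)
The proof goes in two directions, combining Corollary~\ref{corQTaylor} for the easier direction with Theorem~\ref{thmTaylorOmnibus} and the coordinate symmetries of the generated subpower for the harder one.

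For the direction ($\Leftarrow$), assume that for every pair $a, b \in A$ the subalgebra $R$ of $\algA^4$ generated by the four matrix columns contains a tuple of the form $(q, q, r, r)$. By the definition of a generated subalgebra, this tuple equals $t_{a,b}(c_1, c_2, c_3, c_4)$ for some 4-ary term operation $t_{a,b}$ of $\algA$, so that $t_{a,b}(a, b, b, b) = t_{a,b}(b, a, b, b) = q$ and $t_{a,b}(b, b, a, b) = t_{a,b}(b, b, b, a) = r$. The first equality is simultaneously a local Taylor identity at position 1 (with question marks $(b, b, b)$ on the LHS and $(a, b, b)$ on the RHS) and at position 2 (symmetrically); the second equality covers positions 3 and 4 in the same way. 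Hence $t_{a,b}$ is a 4-ary local quasi Taylor term for the pair $(a, b)$, so $\algA$ has local quasi Taylor operations, and Corollary~\ref{corQTaylor} yields a quasi Taylor operation for $\algA$.

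For the direction ($\Rightarrow$), assume $\algA$ has a quasi Taylor operation. By Theorem~\ref{thmTaylorOmnibus}, $\algA$ admits a quasi Siggers term $s$ satisfying $s(r, a, r, e) \approx s(a, r, e, a)$. Fix $a, b \in A$, let $c_1, c_2, c_3, c_4$ be the matrix columns, and let $R$ denote the subalgebra they generate in $\algA^4$. The substitution $r := b$, $a := b$, $e := a$ in the Siggers identity gives $s(b, b, a, b) = s(b, b, b, a)$, so $s$ applied coordinate-wise to $(c_1, c_2, c_3, c_4)$ already yields a tuple in $R$ whose last two coordinates agree. Moreover, the set of generators is preserved by the coordinate swaps $(1\,2)$ (which exchanges $c_1$ and $c_2$) and $(3\,4)$ (which exchanges $c_3$ and $c_4$), so $R$ inherits these two symmetries.

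The main technical obstacle is that the Siggers identity does not directly equate $s(a, b, b, b)$ with $s(b, a, b, b)$, so a single application of $s$ does not simultaneously force the first two coordinates of the resulting tuple to agree. My plan for overcoming this step is to apply Corollary~\ref{corqLoop} to a suitable pp-definable digraph built from $R$, for instance the edge relation $E = \{(u, v) \in A^2 : (u, u, v, v) \in R\}$ on $A$, possibly enlarged to ensure smoothness and algebraic length 1 by exploiting the $(1\,2)$- and $(3\,4)$-symmetries of $R$ together with iterated applications of $s$. The loop guaranteed by the corollary then provides an element of $R$ of the form $(q, q, q, q)$, which is a special case of the sought form $(q, q, r, r)$ and completes the proof.
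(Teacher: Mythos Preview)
Your $(\Leftarrow)$ direction is correct and is exactly the paper's argument.

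For $(\Rightarrow)$ the paper is much more direct than your plan: it asserts that the quasi Siggers identity $s(r,a,r,e)\approx s(a,r,e,a)$ yields \emph{both} $s(a,b,b,b)=s(b,a,b,b)$ and $s(b,b,a,b)=s(b,b,b,a)$, so that a single application of $s$ to the four columns already produces a $(q,q,r,r)$ tuple. In other words, the paper treats your ``main technical obstacle'' as a non-issue.

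You are actually right to worry here, and the paper's claimed first equality does not hold: the argument pattern $(a,b,b,b)$ matches neither side $(r,a,r,e)$ nor $(a,r,e,a)$ of the Siggers identity, so the identity alone does not relate $s(a,b,b,b)$ to anything (concretely, one can write down a Siggers operation on $\{0,1\}$ with $s(0,1,1,1)\neq s(1,0,1,1)$). Some additional work is therefore genuinely needed for this direction. However, your loop-lemma plan is only a sketch, and it is unclear that the digraph $E=\{(u,v):(u,u,v,v)\in R\}$ you propose has any edges at all, let alone is smooth of algebraic length~$1$. A much lighter repair uses part~(b) rather than part~(c) of Theorem~\ref{thmTaylorOmnibus}: take a $k$-qWNU $w$ of $\algA$. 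If $k\geq 4$ then $w(c_1,c_2,c_3,c_4,c_4,\dots,c_4)$ is a constant tuple in $R$, since each row contains exactly one $a$ among $b$'s; if $k=2$ then $w(c_1,c_2)=(w(a,b),w(a,b),w(b,b),w(b,b))$ already has the required shape; and if $k=3$ one first passes to the $9$-ary qWNU $w(w(\cdot),w(\cdot),w(\cdot))$ and argues as for $k\geq 4$.
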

\begin{proof} Assume first that $\algA$ has a quasi Taylor operation.
  By Theorem~\ref{thmTaylorOmnibus} 
  the algebra $\algA$ also has a
  quasi Siggers operation $s$ that satisfies the identity $s(r,a,r,e)\approx s(a,r,e,a)$. It is easy to
  verify that from this identity it follows 
  \begin{align*}
    s(a,b,b,b)&\approx s(b,a,b,b)\\
    s(b,b,a,b)&\approx s(b,b,b,a).
  \end{align*}
  Therefore, given $a,b\in A$ we can apply $s$ to the columns of
  the matrix
\[
   \begin{pmatrix}
     a&b&b&b\\
     b&a&b&b\\
     b&b&a&b\\
     b&b&b&a\\
   \end{pmatrix}
 \]
  to get a tuple whose first two and last two entries are identical.

  In the other direction, the terms $s_{a,b}$ that witness the existence of a tuple
  $(q,q,r,r)$ in the subpower of $\algA$ generated by the columns of
 \[
   \begin{pmatrix}
     a&b&b&b\\
     b&a&b&b\\
     b&b&a&b\\
     b&b&b&a\\
      \end{pmatrix}
 \]
  are local quasi Taylor terms. Therefore $\algA$
  has a quasi Taylor operation by Corollary~\ref{corQTaylor}.
\end{proof}
\begin{corollary}
  The following algorithm solves the problem HAS-QTAYLOR in time
  polynomial in $\|\algA\|$: For each $a,b\in A$ examine the subpower
  $R_{a,b}$
  generated by the columns of the matrix
\[
   \begin{pmatrix}
     a&b&b&b\\
     b&a&b&b\\
     b&b&a&b\\
     b&b&b&a\\
      \end{pmatrix}.
 \]
  If for some $a,b$ the relation $R_{a,b}$ does not contain a tuple of
  the form $(q,q,r,r)$, answer ``no.'' Else answer ``yes.''
\end{corollary}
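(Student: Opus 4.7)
The plan is to observe that this corollary is an essentially immediate consequence of Observation~\ref{obsSiggers} together with a standard polynomial time subpower generation procedure, so the proof should be a short two-part argument: correctness, then complexity.

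For correctness, I would simply invoke Observation~\ref{obsSiggers}: that observation states precisely that $\algA$ has a quasi Taylor operation if and only if for every $a,b\in A$ the subpower $R_{a,b}$ contains a tuple of the form $(q,q,r,r)$. So the algorithm's ``yes''/``no'' output agrees with the truth value of HAS-QTAYLOR by definition. There is nothing else to check here.

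For the runtime, I would reuse the argument already employed in the proof of Theorem~\ref{thmWNU}. By \cite[Proposition 6.1]{freese-valeriote-maltsev-conditions}, the subpower $R_{a,b}$ of $\algA^4$ generated by the four explicit column tuples can be computed in time $O(m\|\algA\|^4)$, where $m$ is the largest arity of a basic operation of $\algA$. Once $R_{a,b}$ is on hand, scanning it for a tuple of the form $(q,q,r,r)$ takes time linear in $|R_{a,b}|\le |A|^4$, which is dominated by the generation cost. We perform this for each of the $|A|^2$ pairs $(a,b)$, yielding a total runtime of $O(m|A|^2\|\algA\|^4)$, which is polynomial in $\|\algA\|$ since $|A|\le \|\algA\|$ and $m\le \|\algA\|$.

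There is no real obstacle here; the only mild point to flag is that we should be explicit that the ``local'' description in Observation~\ref{obsSiggers} is what bridges the global quasi Taylor question to an instance of subpower generation, so that the machinery of \cite[Proposition 6.1]{freese-valeriote-maltsev-conditions} actually applies. With that in place, combining the two bullets gives the claim, and the proof is essentially a one-paragraph wrap-up.
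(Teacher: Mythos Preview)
Your proposal is correct and matches the paper's own proof essentially line for line: the paper also derives correctness directly from Observation~\ref{obsSiggers} and handles the running time by pointing back to the analysis in Theorem~\ref{thmWNU}. If anything, you have spelled out the complexity bound more explicitly than the paper does.
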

\begin{proof}
  The correctness of the algorithm follows from
  Observation~\ref{obsSiggers} while the analysis of the running time
  is similar to that done in the proof of Theorem~\ref{thmWNU}.
\end{proof}
%\section{Conclusions and open problems}
%We have shown that the ``local to global'' property works for quasi weak near
%unanimity operations. It is interesting that while the proof is nontrivial, we
%did not need to assume that the input algebra is idempotent. Indeed, it seems
%that the problems of the type ``given an nonidempotent algebra, verify if it has this
%idempotent operation'' are quite hard.

\section{Acknowledgments}
The author wishes to thank the anonymous referee for their interesting comments.

This work has been supported by the Czech Science Foundation project
GA ČR 18-20123S as well as  the INTER-EXCELLENCE project
LTAUSA19070 M\v SMT Czech Republic and by the Charles University Research Centre
program number UNCE/SCI/022.

\bibliography{citations}%{{{1

\end{document}